\def\commentout#1{{}}
\def\Z{{\mathbb Z}}
\def\N{{\mathbb N}}
\def\cC{{\mathcal C}}
\def\calI{{\mathcal I}}
\def\calT{{\mathcal T}}
\def\calM{{\mathcal M}}
\newcommand\im{\operatorname{im}} 
\newcommand\coim{\operatorname{coim}}
\newcommand\Hom{\operatorname{Hom}}
\newcommand\FinSets{{\operatorname{FinSets}}}
\newcommand\id{{\operatorname{id}}}
\newcommand\Isom{{{\operatorname{Isom}}}}
\newcommand\Extr{{{\operatorname{Extr}}}}
\newcommand\Mono{{{\operatorname{Mono}}}}
\newtheorem{theorem}{Theorem}[section]
\newtheorem{lemma}[theorem]{Lemma}
\newtheorem{proposition}[theorem]{Proposition}
\newtheorem{corollary}[theorem]{Corollary}
\theoremstyle{definition}
\newtheorem{definition}[theorem]{Definition}
\theoremstyle{remark}
\newtheorem{remark}[theorem]{Remark}
\numberwithin{equation}{section}
\begin{document}
\title[Lov\`{a}sz's hom-counting theorem by inclusion-exclusion principle]
{Lov\`{a}sz's hom-counting theorem by inclusion-exclusion principle}

\author{Shoma Fujino}
\address{Mathematics Program \\ 
Graduate School of Advanced Science and Engineering\\
Hiroshima University, 739-8526 Japan}
\email{shomafujino0729@gmail.com}

\author{Makoto Matsumoto}
\address{Mathematics Program \\ 
Graduate School of Advanced Science and Engineering\\
Hiroshima University, 739-8526 Japan}
\email{m-mat@math.sci.hiroshima-u.ac.jp}

\keywords{Hom functor, counting, locally finite category
}
\thanks{
The second author is partially supported by JSPS
Grants-in-Aid for Scientific Research
JP26310211 and JP18K03213.
}

\subjclass[2020]{
05C60 Isomorphism problems in graph theory 
(reconstruction conjecture, etc.) and homomorphisms (subgraph embedding, etc.)
18A20 Epimorphisms, monomorphisms, special classes of morphisms, null morphisms 
68R10 Graph theory (including graph drawing) in computer science
}
\date{\today}

\begin{abstract}
Let $\cC$ be the category of finite graphs.
Lov\`{a}sz (1967) shows that if $|\Hom(X,A)|=|\Hom(X,B)|$ 
holds for any $X$, then $A$ is isomorphic to $B$.
Pultr (1973) gives a categorical generalization using a similar argument.
Both proofs assume that each object has 
a finite number of isomorphism 
classes of subobjects.
Generalizations without this assumption are given by 
Dawar, Jakl, and Reggio (2021) and Reggio (2021). Here
another generalization without this assumption is given,
with a shorter proof.
Examples of categories are given, for which our theorem 
is applicable, but the existing theorems are not.
\end{abstract}

\maketitle
\section{Introduction}
In a category, it clearly holds that
$$
A \cong B \Rightarrow |\Hom(X,A)|=|\Hom(X,B)| \mbox{ for all objects } X. 
$$
A category where the converse holds is said to be combinatorial 
(Definition~\ref{def:combinatorial}). 
This notion is introduced by Pultr \cite{PULTR}
after the Lov\`{a}sz's memorial work \cite{LOVASZ-OPERATION},
and a considerable amount of studies exists:
a direct generalization of Lov\`{a}sz's proof for 
more general categories is given 
by Pultr \cite{PULTR}, a different approach by Isbell \cite{ISBELL}, 
and some new types of proofs are given by Dawar, Jakl, and Reggio \cite{DAWAR}
and Reggio \cite{REGGIO}.
This property and its generalization
in the category of graphs is widely studied, 
sometimes from computational aspects,
see Cai-Govorov \cite{CAI} and its references.
The aim of this paper
is to give yet another simple sufficient condition for a category 
to be combinatorial (Main Theorem~\ref{th:main}).
In the last section, we show some example categories,
to separate the scope of the existing theorems and ours.
There is a category to which our theorem is applicable, 
but the other theorems are not.

\section{Preliminary and Main Theorem}
The notions of mono, epi, pullback, pushout, and subobject
are standard, see MacLane \cite{MACLANE}. The term ``quotient object''
depends on the literature
(in Mitchell \cite[p.7]{MITCHELL} as a dual of subobject, 
and in Pultr \cite[Section~1.1]{PULTR} in a difference sense),
so here we use a less common word:
\begin{definition}\label{def:supobject}
A subobject of $A$ is a mono $m:B \to A$.
A supobject of $A$ is an epi $e:A \to C$.
\end{definition}

\begin{definition} \label{def:order}
Let $\cC$ be a category. For supobjects $q:X \to Q$ and
$q':X \to Q'$, we say $q\geq q'$ if there is an $h$
with $q'=h\circ q$. This gives a partial order
on the isomorphism classes of the supobjects
of $X$. The largest supobject
is the isomorphism class given by $\id_X$. 
If $q$ is proper (i.e., nonisomorphic, see Definition~\ref{def:extremal})
then the supobject $q:X \to Q$ is 
said to be proper. (This is equivalent to $q<\id_X$.)
A maximal supobject is
a supobject that is maximal among the proper subobjects.

Dual notions are similarly 
defined for subobjects of $X$.
To make clear, for subobjects $m:M \to X$
and $m':M' \to X$, $m\leq m'$ if $m=m'\circ h$ for some $h$.
The largest subobjects are isomorphic to $\id_X$.
\end{definition}

\begin{definition}(\cite[Definition~4.3.2]{BORCEUX})
\label{def:extremal}

An epimorphism $e:A \to B$ is an extremal epimorphism 
if $e=m\circ g$ where $m$ is mono, then $m$ is an isomorphism.
An epimorphism is a proper epimorphism, if it is not an 
isomorphism.
\end{definition}

\begin{definition}(\cite[p.12]{MITCHELL})
\label{def:image}

Let $f:X \to Y$ be a morphism. An image of $f$
is a subobject $m:\im f \to Y$ 
such that there is a $g:X \to \im f$ with $f=m\circ g$,
and if $f=m'\circ g'$ with another subobject $m':Z \to Y$,
then $m=m'\circ h$ for some $h:Z \to \im f $ (i.e. $m\leq m'$). 
Since $m$ is mono, $g'=h\circ g$ follows.

Dually, a coimage of $f$ is a supobject $e: X \to \coim f$,
such that there is an $g: \coim f \to Y$ with $f=g\circ e$,
and if $f=g'\circ e'$ with another supobject $e':X \to Z$,
then $e=h\circ e'$ for some $h:Z \to \coim f$ (i.e. $e\leq e'$).
\end{definition}

\begin{definition}
A category $\cC$ is locally finite, if for any objects $A,B$,
$\Hom(A,B)$ is a finite set.
\end{definition}
This terminology seems now common \cite{DAWAR}\cite{REGGIO},
but a different term ``quasifinite'' is used in Pultr \cite{PULTR}.
The following notion {\em combinatorial}
is the theme of this paper.
\begin{definition} (\cite[1.7~Definition]{PULTR})
\label{def:combinatorial}

A locally finite category $\cC$ is said to be
combinatorial, if for all objects $X$
$$
|\Hom(X,A)|=|\Hom(X,B)|
$$
hold then $A$ is isomorphic to $B$.
\end{definition}
Lov\`{a}sz \cite{LOVASZ-OPERATION} 
proved that the categories of operations with finite structures
(including the category of finite graphs) are combinatorial.
Pultr gives a categorical generalization, using a similar argument.
See Theorem~\ref{th:pultr}.

In the rest of this section, we shall show another 
sufficient condition for a category to be combinatorial
(our main Theorem~\ref{th:main}). We start from some preliminary.

\begin{lemma}\label{lem:mono} (Lov\`{a}sz\cite[Lemma~1]{LOVASZ-DIRECT})

Let $\cC$ be a locally finite category. 
If there are monomorphisms $m:A \to B$ and $n:B \to A$,
then $m$ and $n$ are isomorphisms. 
Dually, if there are epimorphisms $e:A \to B$ and
$f:B \to A$, then $e$ and $f$ are isomorphisms. 
\end{lemma}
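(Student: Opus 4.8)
The plan is to reduce the statement to a pigeonhole argument inside the finite endomorphism monoid $\Hom(A,A)$, together with the fact that a left‑cancellable endomorphism of a finite monoid is a unit.

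First I would set $f:=n\circ m\colon A\to A$. Since monomorphisms are closed under composition, $f$ is a monomorphism, and hence so is each power $f^k$ for $k\geq 1$. Because $\cC$ is locally finite, $\Hom(A,A)$ is a finite set, so the list $f, f^2, f^3,\dots$ must repeat: $f^{\,i}=f^{\,i+p}$ for some integers $i\geq 1$ and $p\geq 1$. Writing $f^{\,i+p}=f^{\,i}\circ f^{\,p}$ and left‑cancelling the monomorphism $f^{\,i}$ yields $f^{\,p}=\id_A$. Since powers of $f$ commute, this shows $f$ is an isomorphism with inverse $f^{\,p-1}$; in particular $n\circ m$ is an isomorphism.

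Next I would deduce that $m$ and $n$ are isomorphisms from the fact that $n\circ m$ is. Set $s:=m\circ(n\circ m)^{-1}\colon A\to B$, so that $n\circ s=\id_A$; thus $n$ is a split epimorphism. As $n$ is also a monomorphism, from $n\circ(s\circ n)=(n\circ s)\circ n=n=n\circ\id_B$ we may cancel $n$ on the left to get $s\circ n=\id_B$, so $n$ is an isomorphism with inverse $s$. Then $m=n^{-1}\circ(n\circ m)$ is a composite of isomorphisms, hence an isomorphism as well. Finally, the dual statement follows by applying what has just been proved in the opposite category $\cC^{\mathrm{op}}$, which is again locally finite and in which the monomorphisms are precisely the epimorphisms of $\cC$.

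I do not expect a genuine obstacle here: the only point that needs a little care is that the left‑cancellation step in the second paragraph uses that the power $f^{\,i}$ (not merely $f$ itself) is a monomorphism — which is why one invokes closure of monos under composition — and that the commutativity of the powers of $f$ is what upgrades the one‑sided identity $f^{\,p}=\id_A$ to genuine invertibility.
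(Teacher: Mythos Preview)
Your proof is correct and follows essentially the same route as the paper: a pigeonhole argument in the finite endomorphism monoid to force some power of the composite to be the identity, followed by a cancellation step to upgrade the resulting one-sided inverse to a two-sided one. The only cosmetic difference is that the paper carries this out directly for the epimorphism statement (cancelling on the right) and leaves the mono case implicit, whereas you do the mono case and dualise; the arguments are mirror images of each other.
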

\begin{proof}
We prove only the dual. Since $|\Hom(B,B)|$
is finite, the compositions $(ef)^n$ for $n \in \N$
must coincide for different $n$, say, for $n$ and $n+m$ with $m\geq 1$.
Since $(ef)^n$ is epi, $(ef)^n=(ef)^m(ef)^{n}$ implies
$(ef)^m=\id_B$, and hence $f$ is a splitting monomorphism: putting $g:=(ef)^{m-1}e$,
$gf=\id_B$. Thus $fgf=f$, and since $f$ is epi, $fg=\id_A$.
\end{proof}

\begin{lemma}\label{lem:pushout}
Let $\cC$ be a category. Let $q_i:X \to Q_i$, $i=1,2$, be supobjects
(Definition~\ref{def:supobject}).
Suppose that $q_1, q_2$ has a pushout $q_{3}:X \to Q_3$.
Let $Y$ be an object. Then, inside $\Hom(X,Y)$, we have
$$
q_3^*\Hom(Q_3, Y)=q_{1}^*\Hom(Q_1,Y)\cap q_2^*\Hom(Q_2, Y)
$$
holds, where 
$$
q_i^*\Hom(Q_i,X)=\{f\circ q_i \in \Hom(X,Y) \mid f \in \Hom(Q_i,X)\}.
$$ 
\end{lemma}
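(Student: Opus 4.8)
The plan is to read the equality off directly from the universal property of the pushout; for the displayed identity itself neither local finiteness of $\cC$ nor the fact that $q_1,q_2$ are epimorphisms is needed. (The epi hypothesis serves only to guarantee, by stability of epimorphisms under pushout, that the composite $q_3$ is again a supobject, which matters for later applications rather than for this proof.)

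First I would fix notation: let $p_1\colon Q_1\to Q_3$ and $p_2\colon Q_2\to Q_3$ be the canonical maps of the pushout, so that $q_3=p_1\circ q_1=p_2\circ q_2$, and $(Q_3,p_1,p_2)$ is initial among triples $(Z,a,b)$ with $a\circ q_1=b\circ q_2$. For the inclusion ``$\subseteq$'' I take $g\in\Hom(Q_3,Y)$ and put $f_i:=g\circ p_i\in\Hom(Q_i,Y)$; then $g\circ q_3=g\circ p_i\circ q_i=f_i\circ q_i$ for $i=1,2$, so $g\circ q_3$ lies in $q_1^*\Hom(Q_1,Y)\cap q_2^*\Hom(Q_2,Y)$.

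For the reverse inclusion ``$\supseteq$'', suppose $h=f_1\circ q_1=f_2\circ q_2$ with $f_i\in\Hom(Q_i,Y)$. The single equation $f_1\circ q_1=f_2\circ q_2$ is exactly the compatibility needed to invoke the universal property of the pushout, which produces a $g\colon Q_3\to Y$ with $g\circ p_1=f_1$ and $g\circ p_2=f_2$; then $g\circ q_3=g\circ p_1\circ q_1=f_1\circ q_1=h$, so $h\in q_3^*\Hom(Q_3,Y)$. Combining the two inclusions yields the lemma. There is essentially no obstacle here: the only point worth flagging is that membership of $h$ in the intersection packages precisely into the cocone datum $(f_1,f_2)$ that the pushout consumes, and that everything is symmetric in the two indices.
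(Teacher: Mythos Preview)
Your proof is correct and is essentially the paper's argument unpacked: the paper says in one line that $\Hom(-,Y)$ sends the pushout to a pullback in sets and then uses that the $q_i$ are epi to identify this pullback with the intersection, whereas you verify the two inclusions directly from the universal property. Your observation that the epi hypothesis is not actually needed for the displayed set-equality is correct and marginally sharper than the paper's phrasing, which invokes epimorphicity only so that the $q_i^*$ are injective and the abstract pullback literally becomes an intersection of subsets.
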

\begin{proof}
This follows from the definition of the pushout:
by Yoneda functor $\Hom(-,Y)$, 
a pushout is mapped to a pullback.
Since $q_i$ $(i=1,2)$ are
epi, the morphisms between $\Hom$'s are injective,
and the pullback is isomorphic to the intersection.
\end{proof}

Let us denote by $\Mono(A,B)$ the set of monomorphisms
between $A$ and $B$.
\begin{definition}\label{def:calI} 
Let $\cC$ be a category. 
Let $\calI$ denote a subclass of monomorphisms,
including the identities.
The set of $\calI$-monomorphism from $A$ to $B$ is denoted by
$$
\Mono_\calI(A,B) \subset \Mono(A,B).
$$
\end{definition}

\begin{definition}\label{def:calM}
For each $X$, we specify a subclass {\em $\calM$-supobject}
of the class of the supobjects $q:X \to Q$ .
\end{definition}
For most applications considered, $\calM$ equals
to the class of maximal epimorphisms (see Definition~\ref{def:order}),
and $\calI$ equals to the class of monomorphisms.
We introduced these notions, to make
the condition of Main Theorem~\ref{th:main}
as weak as possible.

We state the main theorem of this paper.
\begin{theorem}\label{th:main} (Main Theorem)
Let $\calM$ and $\calI$ be as in Definitions~\ref{def:calM},
\ref{def:calI}.
Let $\cC$ be a locally finite category satisfying the following
conditions.
\begin{enumerate}
\item For any finite number of $\calM$-supobjects $e_i:X \to Q_i$, $i=1,2,\ldots,m$,
there exists a pushout. \label{enum:pushout}
\item For any object, the set of isomorphism classes 
of its $\calM$-supobjects is finite. \label{enum:qfin}
\item If $f:X \to Y$ is not $\calI$-mono, it factors through \label{enum:mono}
an $\calM$-supobject $X \to Q$.
\item If $f:X \to Y$ factors through an $\calM$-supobject $X \to Q$,
then $f$ is not $\calI$-mono. \label{enum:factor}
\end{enumerate}

Then, $\cC$ is combinatorial (Definition~\ref{def:combinatorial}).
\end{theorem}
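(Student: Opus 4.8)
The plan is to apply the inclusion--exclusion principle to $\Hom(X,-)$ in order to extract the number of $\calI$-monomorphisms out of $X$ from the hom-counts, and then to invoke Lemma~\ref{lem:mono}. First I would fix an object $X$ and, using condition~\eqref{enum:qfin}, choose representatives $q_1:X\to Q_1,\ldots,q_n:X\to Q_n$ of the (finitely many) isomorphism classes of $\calM$-supobjects of $X$. For any object $Y$ the subset $q^*\Hom(Q,Y)$ of $\Hom(X,Y)$ depends only on the isomorphism class of the supobject $q$, so the $q_i^*\Hom(Q_i,Y)$ are \emph{all} the subsets of the form $q^*\Hom(Q,Y)$ with $q$ an $\calM$-supobject of $X$. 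Conditions~\eqref{enum:mono} and~\eqref{enum:factor} then translate into the statement that
$$
\Hom(X,Y) \;=\; \Mono_\calI(X,Y)\ \sqcup\ \bigcup_{i=1}^{n} q_i^*\Hom(Q_i,Y),
$$
where~\eqref{enum:factor} gives disjointness and~\eqref{enum:mono} gives that the union exhausts the complement of $\Mono_\calI(X,Y)$. (In particular $\id_X$ cannot be an $\calM$-supobject, since $\id_X\in\Mono_\calI(X,X)$ by the assumption on $\calI$; so the union above is one of proper subsets.)

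Next I would compute $|\bigcup_i q_i^*\Hom(Q_i,Y)|$ by inclusion--exclusion. For each nonempty $S\subseteq\{1,\ldots,n\}$, condition~\eqref{enum:pushout} yields a pushout $q_S:X\to Q_S$ of $\{q_i:i\in S\}$ (and of all its sub-families), and $q_S$ is again an epimorphism, pushouts of epimorphisms being epimorphisms. An easy induction on $|S|$ --- base case Lemma~\ref{lem:pushout}, inductive step another application of Lemma~\ref{lem:pushout} to the pair $q_{S\setminus\{k\}},q_k$, whose pushout is $q_S$ because colimits compose --- shows $\bigcap_{i\in S}q_i^*\Hom(Q_i,Y)=q_S^*\Hom(Q_S,Y)$, a set of size $|\Hom(Q_S,Y)|$ since $q_S^*$ is injective ($q_S$ being epic). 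Therefore
$$
|\Mono_\calI(X,Y)| \;=\; |\Hom(X,Y)| \;-\; \sum_{\emptyset\neq S\subseteq\{1,\ldots,n\}}(-1)^{|S|+1}\,|\Hom(Q_S,Y)|,
$$
which exhibits $|\Mono_\calI(X,Y)|$ as a $\Z$-linear combination, with coefficients and index set depending on $X$ (and on $\calM$) but not on $Y$, of the numbers $|\Hom(Z,Y)|$ for $Z$ ranging over the finite list $\{X\}\cup\{Q_S:\emptyset\neq S\subseteq\{1,\ldots,n\}\}$.

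The conclusion then follows quickly. If $|\Hom(X,A)|=|\Hom(X,B)|$ for all $X$, the displayed identity gives $|\Mono_\calI(X,A)|=|\Mono_\calI(X,B)|$ for all $X$; specializing to $X=A$ and using $\id_A\in\Mono_\calI(A,A)$ forces $\Mono_\calI(A,B)\neq\emptyset$, hence there is a monomorphism $A\to B$; by symmetry there is a monomorphism $B\to A$; and Lemma~\ref{lem:mono} gives $A\cong B$. The only step I expect to require genuine care is the inclusion--exclusion bookkeeping: one must check that the family $\{q_i^*\Hom(Q_i,Y)\}_i$ is finite, that it is stable under intersection via the pushout identification of Lemma~\ref{lem:pushout}, and that the resulting coefficients are independent of $Y$ --- all of which reduce to conditions~\eqref{enum:pushout},~\eqref{enum:qfin} together with the isomorphism-invariance of $q^*\Hom(Q,Y)$. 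Everything else is formal.
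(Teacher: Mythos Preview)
Your proposal is correct and follows essentially the same route as the paper's proof: pick representatives of the finitely many $\calM$-supobjects, use conditions~\eqref{enum:mono}--\eqref{enum:factor} to identify the complement of $\Mono_\calI(X,Y)$ in $\Hom(X,Y)$ as $\bigcup_i q_i^*\Hom(Q_i,Y)$, apply inclusion--exclusion together with Lemma~\ref{lem:pushout} to rewrite each intersection as a single $|\Hom(Q_S,Y)|$, and finish with $X=A$ and Lemma~\ref{lem:mono}. You are in fact slightly more explicit than the paper on two points it leaves implicit --- that the iterated pushout $q_S$ is again epi (so $|q_S^*\Hom(Q_S,Y)|=|\Hom(Q_S,Y)|$) and that $q^*\Hom(Q,Y)$ depends only on the isomorphism class of the supobject --- both of which are needed for the inclusion--exclusion sum to be a genuine $\Z$-linear combination of hom-counts.
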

\begin{proof}
Suppose that 
$$
|\Hom(X,A)|=|\Hom(X,B)|
$$
holds for any $X$. Let $Z$ be an arbitrary object.
Let $q_i:Z \to Q_i$ $(i=1,2,\ldots,m)$ be the representatives of the
$\calM$-supobjects of $Z$ (they are finite by Condition~\ref{enum:qfin}).
Take $f\in \Hom(Z,A)$. By Conditions~\ref{enum:mono} and \ref{enum:factor}, 
$f$ is not $\calI$-mono if and only if $f$ factors through one of $Q_i$.
Thus, we have
$$
\Mono_\calI(Z,A)=\Hom(Z,A) \setminus \bigcup_{i=1}^m q_i^*\Hom(Q_i,A).
$$
Now we use Lemma~\ref{lem:pushout} and the inclusion-exclusion principle
to obtain
\begin{eqnarray}\label{eq:inclusion-exclusion}
|\Mono_\calI(Z,A)| 
& = & |\Hom(Z,A)| -\sum_{1\leq i \leq m}|q_i^*\Hom(Q_i,A)| \nonumber\\
& & +\sum_{1\leq i < j \leq m}|q_i^*\Hom(Q_i,A)\cap q_j^*\Hom(Q_j,A)| \nonumber\\
& & -\sum_{1\leq i<j<k \leq m}
|q_i^*\Hom(Q_i,A)\cap q_j^*\Hom(Q_j,A)\cap q_k^*\Hom(Q_k,A)|\nonumber\\
& & +\cdots \nonumber\\
&=& |\Hom(Z,A)| -\sum_{1\leq i \leq m}|q_i^*\Hom(Q_i,A)| \nonumber\\
& & +\sum_{1\leq i < j \leq m}|q_{ij}^*\Hom(Q_i\stackrel{Z}{\coprod}Q_j,A) 
\nonumber\\
& & -\sum_{1\leq i<j<k \leq m}
|q_{ijk}^*\Hom(Q_i\stackrel{Z}{\coprod}Q_j \stackrel{Z}{\coprod}Q_k,A)+\cdots,
\end{eqnarray}
where 
$$
q_{ij}:Z \to Q_i\stackrel{Z}{\coprod}Q_j
$$
denotes the pushout of $q_i$ and $q_j$, 
$$
q_{ijk}:Z \to Q_i\stackrel{Z}{\coprod}Q_j\stackrel{Z}{\coprod}Q_k
$$
denotes the pushout of $q_i$, $q_j$, $q_k$, and so on. 
Since the expression (\ref{eq:inclusion-exclusion}) is 
given by a combination of $|\Hom(-,A)|$, we have the same value
when $A$ is replaced with $B$. Namely,
$$
|\Mono_\calI(Z,A)|=|\Mono_\calI(Z,B)|.
$$
If we put $Z=A$, the left-hand side contains $\id_A$,
hence there is a monomorphism $A\to B$.
The symmetric argument gives a monomorphism $B \to A$,
and Lov\`{a}sz's Lemma~\ref{lem:mono} completes the proof.
\end{proof}
Often, the following conditions are (stronger but) easier
to check.
\begin{theorem}~\label{th:weak} $ $

Let $\cC$ be a locally finite category
satisfying the following conditions.
\begin{enumerate}
\item For any epimorphisms $e_i:X \to Q_i$, $i=1,2$,
there exists a pushout. \label{enum:pushout2}
\item For any object, the set of isomorphism classes 
of its maximal supobjects is finite. \label{enum:qfin2}
\item For any proper supobject $q:X \to Q$, there
is a maximal supobject $q':X \to Q'$
such that $q'\geq q$
(see Definition~\ref{def:order} for the terminology). \label{enum:maximal2}
\item If $f:X \to Y$ is not mono, it factors through \label{enum:mono2}
a proper supobject $X \to Q$.
\item If $f:X \to Y$ factors through a proper supobject $X \to Q$,
then $f$ is not mono. \label{enum:factor2}
\end{enumerate}
Then, $\cC$ is combinatorial.
\end{theorem}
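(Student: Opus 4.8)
The plan is to obtain Theorem~\ref{th:weak} as a special case of the Main Theorem~\ref{th:main}: for each object $X$ we take $\calM$ to be the class of maximal supobjects of $X$, and we take $\calI$ to be the class of \emph{all} monomorphisms (this contains the identities, as Definition~\ref{def:calI} requires, and then $\Mono_\calI(A,B)=\Mono(A,B)$ for all $A,B$). With these choices it remains only to check that conditions \ref{enum:pushout2}--\ref{enum:factor2} of Theorem~\ref{th:weak} imply conditions \ref{enum:pushout}--\ref{enum:factor} of Theorem~\ref{th:main}; once this is done, the Main Theorem gives the conclusion. So the proof is essentially a translation exercise.

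Three of the four conditions are immediate. Condition~\ref{enum:qfin} of Theorem~\ref{th:main} is literally Condition~\ref{enum:qfin2} of Theorem~\ref{th:weak}, since the $\calM$-supobjects are exactly the maximal supobjects. For Condition~\ref{enum:factor}: a maximal supobject is in particular a proper supobject, so if $f$ factors through one, Condition~\ref{enum:factor2} tells us $f$ is not a monomorphism, i.e.\ not $\calI$-mono. For Condition~\ref{enum:mono}: if $f:X\to Y$ is not $\calI$-mono, hence not mono, then by Condition~\ref{enum:mono2} we can write $f=g\circ q$ for a proper supobject $q:X\to Q$; by Condition~\ref{enum:maximal2} there is a maximal supobject $q':X\to Q'$ with $q'\geq q$, which by Definition~\ref{def:order} means $q=h\circ q'$ for some $h$; hence $f=(g\circ h)\circ q'$ factors through the $\calM$-supobject $q'$.

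The only condition requiring an argument is Condition~\ref{enum:pushout} of Theorem~\ref{th:main}, which asks for a pushout of an arbitrary finite family of $\calM$-supobjects out of a common object, whereas Condition~\ref{enum:pushout2} of Theorem~\ref{th:weak} only supplies pushouts of \emph{pairs} of epimorphisms. First I would record the standard fact that a pushout of an epimorphism is again an epimorphism: in a pushout square with sides $f:A\to B$, $g:A\to C$ completed by $f':B\to D$, $g':C\to D$, if $g$ is epi and $\alpha f'=\beta f'$ for some $\alpha,\beta:D\to E$, then $\alpha g'g=\alpha f'f=\beta f'f=\beta g'g$, so $\alpha g'=\beta g'$ because $g$ is epi, and then $\alpha=\beta$ by the uniqueness part of the universal property; hence $f'$ is epi. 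Since maximal supobjects are (proper) epimorphisms, I would then induct on the size of the family: the pushout of epimorphisms $q_i:Z\to Q_i$ ($i=1,\ldots,m$) is obtained as the pushout of the epimorphism $Z\to Q_1\stackrel{Z}{\coprod}\cdots\stackrel{Z}{\coprod}Q_{m-1}$ (which is an epimorphism by the fact just proved together with the fact that epimorphisms compose) along $q_m$, and this pushout exists by Condition~\ref{enum:pushout2}; one checks in the routine way that it is a pushout of the whole family. In particular all the pushouts $q_{ij}$, $q_{ijk},\ldots$ appearing in the inclusion-exclusion formula (\ref{eq:inclusion-exclusion}) exist. Having verified conditions \ref{enum:pushout}--\ref{enum:factor} of Theorem~\ref{th:main}, we conclude that $\cC$ is combinatorial. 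The only place where any care is needed is this inductive passage from binary to finite pushouts; everything else is a direct unwinding of the definitions, so I do not anticipate a real obstacle.
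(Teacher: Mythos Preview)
Your proposal is correct and follows the paper's approach: reduce to the Main Theorem by a suitable choice of $\calM$ and $\calI$, then verify the four hypotheses. Two minor remarks: the paper's proof literally says to take $\calM$ to be the \emph{proper} supobjects (almost certainly a slip, since Condition~\ref{enum:qfin} of Theorem~\ref{th:main} would then not follow from Condition~\ref{enum:qfin2}); your choice $\calM=$ maximal supobjects is the one that actually works and is what the rest of the paper's argument uses. Also, the paper leaves the passage from binary pushouts of epimorphisms to iterated ones entirely implicit, whereas you spell it out; your inductive argument (pushouts of epis are epis, then iterate) is the standard one and is exactly what is needed to justify the existence of the objects $Q_i\stackrel{Z}{\coprod}Q_j\stackrel{Z}{\coprod}\cdots$ in~(\ref{eq:inclusion-exclusion}).
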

\begin{proof}
This is obtained from the above Theorem~\ref{th:main}
by considering the case where $\calM$-supbojects are
the proper supobjects (Definition~\ref{def:order})
and $\calI$ is the class of monomorphisms.
Note that the last three conditions imply the last two
conditions in Theorem~\ref{th:main}. Indeed,
if $f$ is not-mono, it factors through a proper supobject,
and then through a maximal supobject. Conversely, if $f$ factors
through a maximal supobject (one of the proper supobjects), 
then $f$ is not mono.
\end{proof}
An even weaker form is the following.
\begin{corollary}\label{cor:weak}
Among the five conditions in 
Theorem~\ref{th:weak}, we replace Conditions~\ref{enum:mono2} 
and \ref{enum:factor2} with

(\ref{enum:mono2}') Any morphism has a coimage.

(\ref{enum:factor2}') $f:X \to Y$ is mono if and only if
$X \to \coim f$ is an isomorphism.

\noindent
Under these five conditions, $\cC$ is combinatorial.
\end{corollary}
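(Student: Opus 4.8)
The plan is to deduce Corollary~\ref{cor:weak} directly from Theorem~\ref{th:weak}: since Conditions~\ref{enum:pushout2}, \ref{enum:qfin2}, \ref{enum:maximal2} are retained verbatim, it suffices to show that (\ref{enum:mono2}') and (\ref{enum:factor2}') imply the original Conditions~\ref{enum:mono2} and \ref{enum:factor2}. For this I would use the coimage (Definition~\ref{def:image}) as the canonical proper supobject through which a non-mono factors, together with the order on supobjects from Definition~\ref{def:order}.

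First I would check Condition~\ref{enum:mono2}. Let $f:X\to Y$ be non-mono. By (\ref{enum:mono2}') it has a coimage $e:X\to\coim f$ with $f=g\circ e$ for some $g$, and $e$ is by definition a supobject, hence an epimorphism. By (\ref{enum:factor2}') the map $e$ is not an isomorphism, so it is a proper supobject (Definition~\ref{def:order}) through which $f$ factors, which is exactly Condition~\ref{enum:mono2}.

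Next I would check Condition~\ref{enum:factor2}. Suppose $f:X\to Y$ factors as $f=h\circ q$ through a proper supobject $q:X\to Q$. The coimage $e:X\to\coim f$ exists by (\ref{enum:mono2}'), and applying its universal property to the factorization $f=h\circ q$ through the supobject $q$ gives $e\leq q$, that is, $e=s\circ q$ for some $s:Q\to\coim f$. If $e$ were an isomorphism, then $e^{-1}\circ s$ would be a left inverse of $q$; being a split monomorphism and also an epimorphism, $q$ would then be an isomorphism (since $q\circ(e^{-1}\circ s)\circ q=q$ and $q$ can be cancelled on the right), contradicting that $q$ is proper. Hence $e$ is not an isomorphism, and by (\ref{enum:factor2}') the morphism $f$ is not mono, which is Condition~\ref{enum:factor2}. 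With all five conditions of Theorem~\ref{th:weak} in force, $\cC$ is combinatorial.

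I do not anticipate a genuine obstacle: the argument is essentially a two-line reduction. The only points needing care are getting the direction of ``$\leq$'' on supobjects right when invoking the universal property of the coimage, and the elementary observation that an epimorphism admitting a left inverse is an isomorphism.
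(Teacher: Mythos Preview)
Your proposal is correct and follows essentially the same route as the paper: reduce to Theorem~\ref{th:weak} by using the coimage to verify Condition~\ref{enum:mono2}, and for Condition~\ref{enum:factor2} use the universal property of the coimage to get $e=s\circ q$, then argue that if $e$ were an isomorphism the proper epi $q$ would be split mono and hence an isomorphism. The only difference is cosmetic: the paper phrases the second step as ``$X\to Q\to\coim f$ is not an isomorphism'' rather than ``$e\leq q$'', but the content is identical.
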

\begin{proof}
Suppose these conditions. 
If $f:X \to Y$ is not mono, then $X \to \coim f$ is a proper
supobject by (\ref{enum:factor2}'), which implies (\ref{enum:mono2}).
If $f:X \to Y$ factors through a proper subobject $X \to Q$,
then $X \to Q \to \coim(f)$ given by the universality 
of the coimage is not an isomorphism 
(since if isomorphic, then $X \to Q$ is a splitting monomorphism
and epimorphism, thus an isomorphism, contradicting to the assumption).
Hence $f$
is not mono by (\ref{enum:factor2}'), which implies (\ref{enum:factor2}).
\end{proof}
We remark the following, related to (\ref{enum:factor2}').
\begin{lemma}
Let $\cC$ be a category with coimages.
The followings are equivalent.
\begin{enumerate}
\item For any $f:X \to Y$, 
if $X \to \coim(f)$ is an isomorphism, then $f$ is mono.
\item For any $f:X \to Y$, $\coim(f) \to Y$ is mono.
\end{enumerate}
\end{lemma}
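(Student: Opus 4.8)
The plan is to prove the two implications separately. Both directions use only the universal property of the coimage from Definition~\ref{def:image} together with two elementary facts: a composite of monomorphisms is a monomorphism, and a morphism that is both an epimorphism and a split monomorphism is an isomorphism.

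For the implication $(2)\Rightarrow(1)$ I would argue directly. Given $f:X\to Y$ with coimage factorization $f=g\circ e$, where $e:X\to\coim f$ is the coimage and $g:\coim f\to Y$ the induced map, suppose $e$ is an isomorphism. By $(2)$ the map $g$ is a monomorphism; since $e$ is a monomorphism as well, $f=g\circ e$ is a composite of monomorphisms, hence a monomorphism. This step is immediate.

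For $(1)\Rightarrow(2)$ the single real point is the following observation, which I would isolate first: \emph{every supobject of $\coim f$ through which $g$ factors is an isomorphism}. To see this, take an epimorphism $e':\coim f\to Z$ with $g=g''\circ e'$ for some $g''$. Then $f=g\circ e=g''\circ(e'\circ e)$, and $e'\circ e:X\to Z$, being a composite of epimorphisms, is a supobject of $X$ through which $f$ factors; the universal property of the coimage $e:X\to\coim f$ then supplies $h:Z\to\coim f$ with $e=h\circ e'\circ e$. Cancelling the epimorphism $e$ on the right gives $h\circ e'=\id_{\coim f}$, so $e'$ is a split monomorphism, and being also an epimorphism it is an isomorphism. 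Now I would apply this to the coimage factorization $g=g_0\circ e_g$ of $g$ itself, with $e_g:\coim f\to\coim g$ and $g_0:\coim g\to Y$: the observation shows $e_g$ is an isomorphism, so condition $(1)$ applied to the morphism $g$ in place of $f$ yields that $g$ is a monomorphism, which is $(2)$.

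I do not anticipate a genuine obstacle. The only place to be careful is keeping straight the direction of the order on supobjects in Definition~\ref{def:image} --- namely that ``$f$ factors through a supobject $q$'' forces $\coim f\leq q$ --- since that is exactly what produces the splitting map $h$; the rest is formal.
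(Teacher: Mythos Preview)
Your proof is correct and follows essentially the same route as the paper's. For $(1)\Rightarrow(2)$, both arguments take the coimage factorization $g=g_0\circ e_g$ of $g:\coim f\to Y$, observe that $e_g\circ e$ is a supobject of $X$ through which $f$ factors, use the universality of $\coim f$ to produce a section of $e_g$, and conclude that $e_g$ is an isomorphism so that $(1)$ applies to $g$. Your version is simply more explicit about the ``split mono plus epi implies iso'' step, which the paper leaves implicit in the phrase ``these are isomorphisms.''
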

\begin{proof}
The second condition implies the first. For the converse,
let $g:\coim(f) \to Y$. Take $\coim(f) \to \coim(g)$. This is a morphism of
supobjects of $X$, and $\coim(f) \geq \coim(g)$. 
By the universality of $\coim(f)$, we have a morphism
$\coim(g) \to \coim(f)$ of supobjects of $X$. Hence the converse 
inequality holds. Consequently, 
these are isomorphisms, and by the first condition,
$g$ is mono.
\end{proof}

\begin{remark}\label{rem:fingp}
One can show that the categories of finite undirected graphs,
finite directed graphs, the functor-categories $\FinSets^\cC$
from a finite category $\cC$ to the category of
finite sets and a category of finite groups satisfy the
conditions of Corollary~\ref{cor:weak}
(hence those of the main Theorem~\ref{th:main}),
hence are combinatorial.
(Pultr's theorem is also applicable for these examples.)

To see the strongness of such a statement, consider
the category of finite groups. 
For finite groups $A,B,C$, suppose that 
$A\times B \cong A \times C$ holds. Then, 
for any $Z$, 
\begin{eqnarray*}
|\Hom(Z,A)||\Hom(Z,B)|
&=&|\Hom(Z,A\times B)| =|\Hom(Z,A\times C)| \\
&=&|\Hom(Z,A)||\Hom(Z,C)|,
\end{eqnarray*}
and since $|\Hom(Z,A)|\geq 1$, we have
$$
|\Hom(Z,B)|=|\Hom(Z,C)|,
$$
and combinatoriality implies that $B\cong C$.
This is non-trivial, see for example \cite{CANCELLATION}.
This property is well-studied as a cancellation law,
see Lov\`{a}sz \cite{LOVASZ-CANCELLATION}.

Note also that Dawar-Jakl-Reggio's
generalization \cite[Theorem~5]{DAWAR} 
(see Theorem~\ref{th:dawar} below)
can not be applied to finite groups, since finite groups 
have pushouts for epimorphisms, but the pushout of $\Z/2$
and $\Z/3$ under the trivial group does not exist
(it is known that the pushout is 
$PSL(2,\Z)$ in the category of groups).
Reggio's generalization (Theorem~\ref{th:reggio} below)
works, see \cite[Example~4.6]{REGGIO}.
\end{remark}

\section{A variant of Pultr's result for a comparison}
The following theorem is a slight 
generalization of Pultra's theorem
\cite[2.2~Theorem]{PULTR}
which is based on an argument given by 
Lov\`{a}sz \cite[(5), p.326]{LOVASZ-OPERATION}. 
We include a proof of this, mainly to show 
the difference from the proof of 
Main Theorem~\ref{th:main},
and partly because the statement is slightly 
stronger, and to give a variant (Theorem~\ref{th:var} below).

\begin{theorem} \label{th:pultr}

Let $\cC$ be a locally finite category satisfying
the following conditions.
\begin{enumerate}
\item For each object, the isomorphism classes of 
its subobjects is finite. \label{enum:fin}
\item Every morphism $f$ has its image. \label{enum:image}
\item \label{enum:extremal} 
If $h=g\circ f$ and $g$ is an image of $h$,
then $f$ is an extremal epimorphism.
\end{enumerate}
Then, $\cC$ is combinatorial. 
\end{theorem}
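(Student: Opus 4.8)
The plan is to mimic the classical Lov\`{a}sz counting argument, now phrased in the categorical language of subobjects and images. Suppose $|\Hom(X,A)|=|\Hom(X,B)|$ for all $X$. For a fixed object $X$, I would partition $\Hom(X,A)$ according to the image of each morphism: every $f\colon X\to A$ factors as $f=m\circ e$ with $m\colon \im f\to A$ the image and, by Condition~\ref{enum:extremal}, $e\colon X\to\im f$ an extremal epimorphism. Grouping by the isomorphism class of the subobject $m\colon M\hookrightarrow A$ gives
\begin{equation*}
|\Hom(X,A)|=\sum_{M}\,\#\{\text{extremal epis } X\twoheadrightarrow M\},
\end{equation*}
where $M$ ranges over the (finitely many, by Condition~\ref{enum:fin}) isomorphism classes of subobjects of $A$. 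Here I must check that an extremal epi $e\colon X\to M$ composed with a fixed mono $M\hookrightarrow A$ really does have image exactly $M$, so that the count is correctly indexed; this follows because if $e$ factored through a smaller subobject of $M$, that would contradict extremality of $e$.

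Next I would introduce the auxiliary counting function $s(X,M):=\#\{\text{extremal epis } X\to M\}$ and the "all morphisms" count $t(X,M):=|\Hom(X,M)|$. Applying the displayed identity with $A$ replaced by the subobject $M$ itself (whose subobjects are again finite in number), one gets $t(X,M)=\sum_{M'\leq M} s(X,M')$, a triangular system over the poset of isomorphism classes of subobjects. The point is that from the numbers $|\Hom(X,A)|$ one can recover, by M\"obius inversion over this finite poset — or, more elementarily, by induction on the number of subobjects — the numbers $s(X,M)$ for $M$ a subobject of $A$, and symmetrically for $B$. Taking $X=A$: the identity $\id_A$ is an extremal epi $A\to A$, so $s(A,A)\geq 1$; since the whole subobject-counting data of $A$ agrees with that of $B$, there must be a subobject of $B$ isomorphic to $A$ carrying a nonzero extremal-epi count from $A$, i.e.\ an extremal epi $A\to B$, which being extremal and factoring through a mono into $B$ forces a mono $A\to B$. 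The symmetric argument gives a mono $B\to A$, and Lov\`{a}sz's Lemma~\ref{lem:mono} finishes the proof.

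The step I expect to be the main obstacle is making the "triangular system" rigorous without circularity: one wants to argue that $s(X,M)$ depends only on the multiset $\{|\Hom(X,M')|\}$ and that this multiset, for $M'$ running over subobjects of $A$, is itself determined by $|\Hom(-,A)|$. The clean way is to induct on $n(A):=$ the number of isomorphism classes of proper subobjects of $A$: if $n(A)=0$ then $A$ has no proper subobject, every morphism into $A$ is forced (via its image) to be an extremal epi onto $A$ or onto $A$ itself, and comparison with $B$ is immediate; for the inductive step one peels off a maximal proper subobject. Care is needed because two non-isomorphic subobjects of $A$ may be abstractly isomorphic as objects — the argument must track subobjects as isomorphism classes of monos into a fixed object, and reconcile this with the intrinsic count $|\Hom(X,-)|$ of the underlying objects; this bookkeeping, rather than any deep categorical input, is where the proof requires attention.
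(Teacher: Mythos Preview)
Your decomposition is essentially the same as the paper's --- once you regroup your sum over subobject classes of $A$ by abstract isomorphism type, your formula becomes exactly the paper's identity
\[
|\Hom(X,A)|=\sum_{T\in\calT}|\Isom(T,T)|^{-1}\,|\Extr(X,T)|\cdot|\Mono(T,A)|.
\]
The difference is that the paper indexes over a single set $\calT$ of abstract isomorphism classes, the same for $A$ and $B$, whereas you index over the subobject poset of $A$. That choice is where your argument runs into trouble.

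The gap is in the inversion step. Your M\"obius inversion over the subobject poset of $A$ yields $s(X,A)$ as a $\Z$-linear combination of the numbers $|\Hom(X,M')|$ for $M'$ ranging over subobjects of $A$. But these are \emph{not} among the given data: the hypothesis tells you $|\Hom(X,A)|=|\Hom(X,B)|$, not $|\Hom(X,M')|=|\Hom(X,?)|$ for proper subobjects $M'$ of $A$. Your proposed induction on $n(A)$ does not supply this either, since the inductive hypothesis would apply only to pairs $(A',B')$ satisfying the full hom-count hypothesis, and you have no candidate $B'$ to pair with a subobject $M'$ of $A$. Even the base case $n(A)=0$ is not ``immediate'': you get $|\Hom(X,A)|=|\Extr(X,A)|$, but on the $B$ side you still have a sum over the (unknown) subobject poset of $B$. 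Separately, the sentence ``an extremal epi $A\to B$, which being extremal and factoring through a mono into $B$ forces a mono $A\to B$'' is backwards: extremality of an epi forces the \emph{mono} it factors through to be an isomorphism, not the epi to be a mono.

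The paper avoids all of this by working over $\calT$ and proving the key claim $|\Mono(T,A)|=|\Mono(T,B)|$ by infinite descent rather than M\"obius inversion: if it fails for some $T$, the identity above (with $X=T$) forces it to fail for some $T_1$ admitting a proper extremal epi $T\to T_1$; iterating produces an infinite strictly descending chain $T,T_1,T_2,\ldots$, each a subobject of $A$ or of $B$, contradicting Condition~(\ref{enum:fin}). This is the step your outline is missing.
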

We shall give a proof soon.
The above theorem is slightly stronger than the 
following original Pultr's theorem.
Note that a quasifinite category in Pultr's terminology
is a locally finite category in our terminology.
\begin{theorem} (Pultr \cite[2.2~Theorem]{PULTR}) \label{th:original}
Let $\cC$ be a locally finite category satisfying
the conditions (\ref{enum:fin}), (\ref{enum:image})
in the above Theorem~\ref{th:pultr}, and

(\ref{enum:extremal}') Every quotient is an epimorphism.

\noindent
Then, $\cC$ is combinatorial. 
\end{theorem}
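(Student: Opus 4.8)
The plan is to derive this theorem from Theorem~\ref{th:pultr} by observing that Pultr's hypotheses are a strengthening of those of Theorem~\ref{th:pultr}. Conditions~(\ref{enum:fin}) and (\ref{enum:image}) are common to both statements, so the only thing to prove is that condition~(\ref{enum:extremal}') ``every quotient is an epimorphism'', together with the existence of images, implies condition~(\ref{enum:extremal}) of Theorem~\ref{th:pultr}. Here one uses that a \emph{quotient} of a morphism $h$, in Pultr's sense, is exactly the first factor $f$ in a factorization $h=g\circ f$ in which $g$ is an image of $h$ (recall $g$ is mono, so $f$ is determined by $h$ and $g$); thus (\ref{enum:extremal}') says precisely that every such $f$ is an epimorphism.

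So I would fix a factorization $h=g\circ f$ with $g\colon I\to Y$ an image of $h\colon X\to Y$ and $f\colon X\to I$ the induced map. By the discussion above, $f$ is the quotient of $h$, hence an epimorphism by~(\ref{enum:extremal}'). It remains to check the extremal property from Definition~\ref{def:extremal}: suppose $f=m\circ g'$ with $m\colon W\to I$ a monomorphism. Then $h=(g\circ m)\circ g'$, and $g\circ m\colon W\to Y$ is a monomorphism, being a composite of two monomorphisms. By the universal property of the image $g$ (Definition~\ref{def:image}), there is $h'\colon I\to W$ with $g=(g\circ m)\circ h'=g\circ(m\circ h')$; since $g$ is mono, $m\circ h'=\id_I$, so $m$ is a split epimorphism, and a split epimorphism that is also a monomorphism is an isomorphism. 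Hence $f$ is an extremal epimorphism, which is condition~(\ref{enum:extremal}) of Theorem~\ref{th:pultr}.

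All three hypotheses of Theorem~\ref{th:pultr} now hold, so that theorem gives that $\cC$ is combinatorial, completing the proof. I do not anticipate a real obstacle: the content is just the observation that, once images exist, the canonical map onto the image automatically has the extremal-factorization property, so the weaker-looking ``epimorphism'' in~(\ref{enum:extremal}') can be upgraded to ``extremal epimorphism'' for free. The one point requiring care is to use a genuine image (the least subobject through which $h$ factors) rather than an arbitrary mono factorization, and to match Pultr's notion of ``quotient'' with the map onto the image.
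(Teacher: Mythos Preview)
Your proposal is correct and follows essentially the same route as the paper: both reduce to Theorem~\ref{th:pultr} by showing that condition~(\ref{enum:extremal}') implies condition~(\ref{enum:extremal}). The paper cites Pultr \cite[1.4~1)]{PULTR} for the fact that the map onto the image is a quotient and \cite[1.2~Remark]{PULTR} for ``quotient $+$ epi $\Rightarrow$ extremal epi'', whereas you unpack the latter implication explicitly via the image universal property; this is the same argument with the citation expanded. One small terminological slip: in Pultr's usage ``quotient'' is an \emph{absolute} property of a morphism $e$ (meaning $\id$ is an image of $e$), not a notion relative to a given $h$; what you actually need (and use) is that the first factor $f$ in an image factorization of $h$ is a quotient in this absolute sense, which is exactly \cite[1.4~1)]{PULTR}.
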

We shall define the term {\em quotient} now,
but use only in the rest of this section to avoid confusions.
A morphism $e:A \to B$
is a quotient, if in $e=\id_B\circ e$, $\id_B$ is an image of $e$
\cite[1.1~Definition]{PULTR}.
Theorem~\ref{th:pultr} implies Theorem~\ref{th:original}
as follows. 
\begin{proof}
It suffices to show that if every quotient 
is an epimorphism, then the third condition of 
Theorem~\ref{th:pultr} follows.
Suppose that $f=g \circ h$ with $g$ being an image of $f$.
Then $h$ is a quotient \cite[1.4~1)]{PULTR}.
By the third condition,
$h$ is a quotient and epi, which implies that $h$ is 
an extremal epimorphism
\cite[1.2 Remark]{PULTR}.
\end{proof}
For a comparison to our main result Theorem~\ref{th:main},
we would like to give a proof of Theorem~\ref{th:pultr},
which is very similar to those by Lov\`{a}sz and Pultr.
\begin{proof} (of Theorem~\ref{th:pultr}).
Let $\calT$ be a system of objects of $\cC$ containing
exactly one representative from each isomorphism class.
We construct a mapping
\begin{equation}\label{eq:classify}
\Hom(A,B) \to \calT 
\end{equation}
by mapping $h$ to the class of $\im h$. 
Thus if $h=g \circ f$ with $g$ being the image of $h$ with domain $T$
(one can choose a unique $T$ and $g$ by the uniqueness of the
image and by the definition of $\calT$),
then $h$ is mapped to $T$.
This gives a disjoint decomposition
\begin{equation}\label{eq:disjoint}
\Hom(A,B) = \coprod_{T \in \calT} \Hom(A,B)_T, 
\end{equation}
where $\Hom(A,B)_T$ denotes the inverse image of $T$.
Since $\cC$ is locally finite, this is a finite 
sum. 
We consider a mapping for $T$ appeared in the sum 
(\ref{eq:disjoint})
\begin{equation}\label{eq:composite}
\Extr(A,T)\times \Mono(T,B) \to \Hom(A,B)_T, \quad (f,g) \mapsto g\circ f,
\end{equation}
where $\Extr(A,T)$ means the set of extremal epimorphisms.
Take an $f\in \Extr(A,T)$,
and a $g\in \Mono(T,B)$.
Let $g'$ be an image of $g\circ f$.
Thus we have $g \circ f=g'\circ f'$.
The universality of the image implies that there is a mono $m$
with $g\circ m=g'$ and $f=m\circ f'$ (Definition~\ref{def:image}).
Since $f$ 
is an extremal epi, $m$ is isomorphic. This implies that
$g$ is an image of $g \circ f$, hence
(\ref{eq:composite}) is well-defined. It is surjective
since if $h \in \Hom(A,B)_T$, then there are $f,g$ with
$h=g\circ f$ such that $g$ is an image of $h$ with domain $T$
by the comment after (\ref{eq:classify}). Then
$f$ is an extremal epimorphism by Condition~(\ref{enum:extremal})
which means the surjectivity of (\ref{eq:composite}).
The fiber, i.e., the inverse image of one point in (\ref{eq:composite})
has the same cardinality as $\Isom(T,T)$, since the group acts
faithfully and transitively on the fiber, as follows. 
Fix an $(f,g)$. Then $(f',g')$ maps to the same element in 
$\Hom(A,B)_T$ if and only if $g\circ f=g'\circ f'$. Let $h$
be this composition. 
By the above argument, $g$ and $g'$ are images of $h$,
and it follows that $g'=g\circ m$ for an isomorphism 
$m \in \Isom(T,T)$.
Thus $\Isom(T,T)$ transitively acts on the fiber
by $(f,g) \mapsto (m^{-1}f,gm)$. 
Since $g$ is mono, such an $m$ is unique, which shows the 
faithfulness. Thus (\ref{eq:composite}) implies
\begin{equation}
|\Extr(A,T)|\times |\Mono(T,B)| 
= |\Isom(T,T)| \times |\Hom(A,B)_T| 
\end{equation}
and hence
\begin{equation}\label{eq:main}
|\Hom(A,B)|=\sum_{T\in \calT}
|\Isom(T,T)|^{-1} |\Extr(A,T)|\times |\Mono(T,B)|. 
\end{equation}
We claim that 
$$
|\Hom(T,B)|=|\Hom(T,C)| \mbox{ for all $T\in \calT$}
$$
implies that
$$
|\Mono(T,B)|=|\Mono(T,C)| \mbox{ for all $T\in \calT$}.
$$
Then, by putting $T=B$ there is a monomorphism $B\to C$,
and a symmetric argument gives a monomorphism $C \to B$,
and Lemma~\ref{lem:mono} completes the proof.
Let us prove the claim.
From $|\Hom(T,B)|=|\Hom(T,C)|$,
(\ref{eq:main}) implies
$$
0=
\sum_{T'\neq T}
\frac{|\Extr(T,T')|}{|\Isom(T',T')|}(|\Mono(T',B)|-|\Mono(T',C))
+(|\Mono(T,B)|-|\Mono(T,C)|).
$$
(Since $\Extr(T,T)=\Isom(T,T)$, by Lemma~\ref{lem:mono} for epi.)
This implies that if
$$
|\Mono(T,B)| \neq |\Mono(T,C)|
$$
then 
\begin{equation}\label{eq:monot}
|\Mono(T_1,B)| \neq |\Mono(T_1,C)| 
\end{equation}
for some $T_1$ with a non-isomorphic extremal epimorphism $e:T\to T_1$.
By the assumption, 
$$
|\Hom(T_1,B)| = |\Hom(T_1,C)|
$$
holds, and we may iterate the same argument for $T_1$, to have $T_2$
with a proper extremal epimorphism $e_1:T_1 \to T_2$ 
(For proper, see Definition~\ref{def:extremal}). 
In this way, we have an infinite sequence of objects $e_i:T_i \to T_{i+1}$.
They are mutually non-isomorphic.
(Assume any two are isomorphic, say $T_i$ and $T_j$, $i<j$.
Then there is an isomorphism, hence an epimorphism $T_j \to T_i$,
and Lemma~\ref{lem:mono} implies that the epimorphism $T_i \to T_j$ is an 
isomorphism, which implies $e_i$ is 
a monomorphism, and an extremal epimorphism,
and thus
$e_i$ is an isomorphism, leading to
a contradiction.) 
These $T_i$'s are subobjects of $B$ or $C$,
since in (\ref{eq:monot}) one of the two is not empty, 
so one of $B$ and $C$ has infinitely many non-isomorphic subobjects 
$T_i$. 
This contradicts the finiteness 
in Condition~\ref{enum:fin}.
\end{proof}
Recall that a preordered set is
well-founded, if every non-empty subset
has a minimal element. From the proof above, the following 
variant holds, which is our second main result.
\begin{theorem}\label{th:var}
Theorem~\ref{th:pultr} also holds if the condition (\ref{enum:fin})
is replaced with:

(\ref{enum:fin}') For each object, the preordered class
of its supobjects is well-founded.
\end{theorem}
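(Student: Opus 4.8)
The plan is to reuse the proof of Theorem~\ref{th:pultr} essentially verbatim and to modify only its last paragraph, which is the single place where the finiteness of the isomorphism classes of subobjects (the original Condition~(1)) is used. Everything up to and including formula~(\ref{eq:main}), and the descent argument that extracts from $|\Hom(T,B)|=|\Hom(T,C)|$ an infinite sequence of proper extremal epimorphisms $e_i\colon T_i\to T_{i+1}$ with $|\Mono(T_i,B)|\neq|\Mono(T_i,C)|$, uses only Conditions~(2),~(3), local finiteness, and Lemma~\ref{lem:mono}, all of which are still assumed.

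So I would run that argument without change up to the point where we have an object $T_0$ with $|\Mono(T_0,B)|\neq|\Mono(T_0,C)|$ together with an infinite chain of proper (hence non-isomorphic) extremal epimorphisms $T_0\to T_1\to T_2\to\cdots$ with maps $e_0,e_1,e_2,\dots$. The new ingredient replaces the original appeal to Condition~(1): instead of recording the mutually non-isomorphic $T_i$ as subobjects of $B$ or of $C$, I would set $q_i:=e_{i-1}\circ\cdots\circ e_0\colon T_0\to T_i$, which is an epimorphism and hence a supobject of the single object $T_0$. Since $q_{i+1}=e_i\circ q_i$, the supobject $q_{i+1}$ factors through $q_i$, so $q_i\geq q_{i+1}$ in the preorder of supobjects of $T_0$ (Definition~\ref{def:order}). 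The chain is strictly descending: if $q_i$ and $q_{i+1}$ were isomorphic supobjects, then $q_i=h\circ q_{i+1}=h\circ e_i\circ q_i$ for some $h$, and right-cancelling the epimorphism $q_i$ gives $h\circ e_i=\id_{T_i}$; thus $e_i$ is a split monomorphism, hence a monomorphism which is also an extremal epimorphism, hence an isomorphism (Definition~\ref{def:extremal}) — contradicting properness. (This is exactly the computation already used in the proof of Theorem~\ref{th:pultr} to show the $T_i$ are mutually non-isomorphic.) Hence $q_1>q_2>q_3>\cdots$ is an infinite strictly descending chain in the preordered class of supobjects of $T_0$, so the non-empty subset $\{q_i\mid i\geq 1\}$ has no minimal element, contradicting Condition~(1'). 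Therefore $|\Mono(T,B)|=|\Mono(T,C)|$ for all $T$, and the proof concludes as in Theorem~\ref{th:pultr}: taking $T=B$ yields a monomorphism $B\to C$, the symmetric argument a monomorphism $C\to B$, and Lemma~\ref{lem:mono} finishes.

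I do not expect a genuine obstacle. The one thing to get right is the bookkeeping: the objects $T_i$ produced by the descent must be reorganized from ``subobjects of $B$ or $C$'' into a single descending chain of \emph{supobjects of the fixed object $T_0$}, and the strictness of that chain is precisely the ``extremal epimorphism $+$ monomorphism $\Rightarrow$ isomorphism'' observation already present in the text. It is also worth double-checking the direction of the order in Definition~\ref{def:order} ($\id_{T_0}$ is the top, and $q_i\geq q_{i+1}$ because $q_{i+1}$ factors through $q_i$) and that well-foundedness in the stated sense — every non-empty subset has a minimal element — indeed forbids an infinite strictly descending chain; both are immediate.
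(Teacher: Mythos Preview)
Your proposal is correct and follows essentially the same approach as the paper: the paper's own proof is the single sentence ``If $\cC$ is not combinatorial, then in the proof, $T_i$ gives an infinite sequence of strictly decreasing supobjects of $T$, contradicting (\ref{enum:fin}')'', and you have simply spelled out the details of that sentence, including the verification that the chain of $q_i$'s is strictly decreasing.
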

\begin{proof}
If $\cC$ is not combinatorial, then
in the proof, $T_i$ gives an infinite sequence of strictly decreasing
supobjects of $T$, contradicting (\ref{enum:fin}').
\end{proof}

We remark that Dawar, Jakl, Reggio \cite[Theorem~5]{DAWAR} gives a
different sufficient condition, as follows.
\begin{theorem} (Dawar-Jakl-Reggio) \label{th:dawar}
Let $\cC$ be a locally finite category. If $\cC$
has pushouts and a proper factorization system, then it is combinatorial.
\end{theorem}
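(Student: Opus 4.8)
The plan is to imitate the proof of Main Theorem~\ref{th:main}, replacing the hypothesis that an object has only finitely many $\calM$-supobjects by a small compactness argument. Write $(\mathcal{E},\mathcal{M})$ for the proper factorization system, so $\mathcal{E}$ consists of epimorphisms and $\mathcal{M}$ of monomorphisms, both classes contain the isomorphisms and are closed under composition, every morphism factors essentially uniquely as an $\mathcal{E}$-map followed by an $\mathcal{M}$-map, and (standard for the left class) $\mathcal{E}$ is stable under pushout. The first step is the analogue of Conditions~\ref{enum:mono} and \ref{enum:factor}: for $f:X\to A$, one has $f\in\mathcal{M}$ if and only if $f$ does not factor through any proper $\mathcal{E}$-quotient $q:X\to Q$ (an $\mathcal{E}$-map out of $X$ that is not an isomorphism). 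Indeed, if $f=m\circ e$ is the $(\mathcal{E},\mathcal{M})$-factorization and $e$ is not an isomorphism, then $f$ factors through the proper $\mathcal{E}$-quotient $e$; conversely, if $f=g\circ q$ with $q$ a proper $\mathcal{E}$-quotient, factoring $g=m'\circ e'$ shows $e'\circ q\in\mathcal{E}$ is the $\mathcal{E}$-part of $f$, and it is not an isomorphism, else $q$ would be a split monomorphism and an epimorphism, hence an isomorphism. Writing $\mathcal{M}(X,A)$ for the set of $\mathcal{M}$-morphisms $X\to A$, this gives
\[
\mathcal{M}(X,A)=\Hom(X,A)\setminus\bigcup_{[q]} q^*\Hom(Q,A),
\]
the union over isomorphism classes of proper $\mathcal{E}$-quotients $q:X\to Q$ of $X$.

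Next I would run inclusion--exclusion, but only over finite subfamilies, to sidestep that there may be infinitely many such $q$. For any finite collection $q_1,\dots,q_n$ of proper $\mathcal{E}$-quotients of $X$, the iterated pushouts $q_T:X\to Q_T$ of $\{q_i:i\in T\}$ exist (built from binary pushouts, which exist by hypothesis) and lie again in $\mathcal{E}$, hence are epimorphisms; so Lemma~\ref{lem:pushout} together with the inclusion--exclusion principle yields, exactly as in (\ref{eq:inclusion-exclusion}),
\[
\Bigl|\Hom(X,A)\setminus\bigcup_{i=1}^{n}q_i^*\Hom(Q_i,A)\Bigr|
=|\Hom(X,A)|+\sum_{\emptyset\ne T\subseteq\{1,\dots,n\}}(-1)^{|T|}\,|\Hom(Q_T,A)|.
\]
The right-hand side is a fixed $\Z$-linear combination of values $|\Hom(-,A)|$, with the combination and the objects $Q_T$ depending only on $X$ and on $q_1,\dots,q_n$, not on $A$. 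Since $\Hom(X,A)$ is finite, the union $\bigcup_{[q]}q^*\Hom(Q,A)$ is already exhausted by finitely many of the $q$'s; hence $|\mathcal{M}(X,A)|$ equals the minimum, over all finite collections $q_1,\dots,q_n$ of proper $\mathcal{E}$-quotients of $X$, of the displayed expression, and this minimum is attained. As the index set of this minimum depends on $X$ alone and each entry is the same function of the numbers $|\Hom(-,A)|$, the hypothesis $|\Hom(-,A)|=|\Hom(-,B)|$ forces $|\mathcal{M}(X,A)|=|\mathcal{M}(X,B)|$ for every $X$.

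Finally, setting $X=A$, the set $\mathcal{M}(A,A)$ contains $\id_A$, so $\mathcal{M}(A,B)\ne\emptyset$, and since $\mathcal{M}$ consists of monomorphisms this produces a monomorphism $A\to B$; symmetrically there is a monomorphism $B\to A$, and Lov\`{a}sz's Lemma~\ref{lem:mono} gives $A\cong B$. Thus $\cC$ is combinatorial.

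I expect the only genuine obstacle to be the passage from the possibly infinite family of proper $\mathcal{E}$-quotients to a valid finite inclusion--exclusion: this is precisely where local finiteness (making $\Hom(X,A)$ finite, so the union stabilizes) and the existence of pushouts (turning each finite intersection into an honest $\Hom$-term via Lemma~\ref{lem:pushout}) are used together, and where \emph{proper} factorization system — not merely factorization system — matters, so that $\mathcal{E}$-quotients are epimorphisms and $\mathcal{M}$-maps are monomorphisms. Checking that $\mathcal{E}$ is closed under composition and pushout, hence that each $q_T$ is an epimorphism, is the routine part.
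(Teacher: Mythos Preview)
The paper does not give its own proof of Theorem~\ref{th:dawar}; it is merely quoted from \cite{DAWAR} for the purpose of comparison with the paper's results. So there is no ``paper's proof'' to compare against directly.

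Your argument is correct, and it is interesting precisely because it shows that Theorem~\ref{th:dawar} can be obtained by the inclusion--exclusion method of Main Theorem~\ref{th:main}, something the paper does not observe. The new ingredient beyond the proof of Theorem~\ref{th:main} is your minimum/compactness step: rather than assuming only finitely many $\calM$-supobjects, you note that $|\mathcal{M}(X,A)|=\min_{\{q_1,\dots,q_n\}}\bigl|\Hom(X,A)\setminus\bigcup_i q_i^*\Hom(Q_i,A)\bigr|$, the minimum taken over finite families of proper $\mathcal{E}$-quotients of $X$; this minimum is attained because $\Hom(X,A)$ is finite, and each term in the minimum is, via Lemma~\ref{lem:pushout} and inclusion--exclusion, a fixed $\Z$-combination of numbers $|\Hom(Q_T,A)|$ with the $Q_T$ depending only on $X$ and the chosen family. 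Since the indexing family and the combinations depend only on $X$, the hypothesis $|\Hom(-,A)|=|\Hom(-,B)|$ forces equality of the minima, hence $|\mathcal{M}(X,A)|=|\mathcal{M}(X,B)|$. The remaining checks (that $\mathcal{E}$ is closed under composition and pushout so each $q_T$ is epi, that $f\in\mathcal{M}$ iff $f$ factors through no proper $\mathcal{E}$-quotient, and the final appeal to Lemma~\ref{lem:mono}) are routine and correctly handled.
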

Reggio gives another sufficient condition \cite[Theorem~4.3]{REGGIO}
\begin{theorem} (Reggio) \label{th:reggio}
Let $\cC$ be a locally finite category. If $\cC$
has a proper factorization system $(Q,M)$
such that $\cC$ is $Q$-well-founded. Then it is combinatorial.
\end{theorem}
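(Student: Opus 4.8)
The plan is to mimic the proof of Theorem~\ref{th:pultr} and its variant Theorem~\ref{th:var}, replacing the factorization of a morphism through its image (as extremal epi followed by mono) with the given proper factorization $f=m\circ q$, $q\in Q$, $m\in M$ (so $Q\subseteq\mathrm{Epi}$ and $M\subseteq\mathrm{Mono}$), and replacing the finiteness of subobjects with $Q$-well-foundedness, which enters only at the last step. I read ``$Q$-well-founded'' as: for every object $X$, the preordered class of $Q$-supobjects of $X$ (ordered as in Definition~\ref{def:order}) has no infinite strictly descending chain; equivalently, there is no infinite sequence $X = X_0 \xrightarrow{e_0} X_1 \xrightarrow{e_1} X_2 \to \cdots$ of morphisms in $Q$, none of which is an isomorphism.

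First I would set up the counting identity. Fix a system $\calT$ of isomorphism-class representatives. Since $(Q,M)$ is an orthogonal factorization system, the intermediate object of the $(Q,M)$-factorization of $h\in\Hom(A,B)$ is well defined up to isomorphism, so classifying $h$ by that object yields a finite disjoint decomposition $\Hom(A,B)=\coprod_{T\in\calT}\Hom(A,B)_T$. For each $T$ the composition map $Q(A,T)\times M(T,B)\to\Hom(A,B)_T$, $(q,m)\mapsto m\circ q$, is well defined and surjective, and its fibres carry a faithful transitive action of $\Aut(T)$ via $(q,m)\mapsto(\alpha\circ q,\,m\circ\alpha^{-1})$, transitivity and freeness both coming from the uniqueness of the diagonal fill-in of the factorization system. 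As $\Aut(T)$ is finite, this gives the finite sum
$$
|\Hom(A,B)| \;=\; \sum_{T\in\calT}\frac{|Q(A,T)|\cdot|M(T,B)|}{|\Aut(T)|},
$$
the exact analogue of~(\ref{eq:main}).

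Next, let $B,C$ be objects with $|\Hom(X,B)|=|\Hom(X,C)|$ for all $X$; the goal is $B\cong C$. Specializing $A$ to $T\in\calT$ and subtracting the two instances of the identity, and using $Q(T,T)=\Aut(T)$ — a morphism $T\to T$ in $Q$ is an epimorphism, hence an isomorphism by the dual part of Lemma~\ref{lem:mono} applied together with $\id_T$ — we isolate
$$
|M(T,B)|-|M(T,C)| \;=\; -\sum_{T'\neq T}\frac{|Q(T,T')|}{|\Aut(T')|}\bigl(|M(T',B)|-|M(T',C)|\bigr).
$$
Thus if $|M(T,B)|\neq|M(T,C)|$ there is some $T'\neq T$ with $Q(T,T')\neq\emptyset$ and $|M(T',B)|\neq|M(T',C)|$, the witnessing $Q$-morphism $T\to T'$ being proper. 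Iterating from a hypothetical $T_0$ with $|M(T_0,B)|\neq|M(T_0,C)|$, and using $|\Hom(T_i,B)|=|\Hom(T_i,C)|$ at each stage, produces an infinite chain of proper $Q$-morphisms $T_0\xrightarrow{e_0}T_1\xrightarrow{e_1}T_2\to\cdots$. Their composites $T_0\to T_k$ lie in $Q$ and, as $Q$-supobjects of $T_0$, form a strictly descending chain: were $T_0\to T_k$ to factor through $T_0\to T_{k+1}$, then $T_0\to T_k$ being epi would force $e_k$ to be a split monomorphism and an epimorphism, hence an isomorphism, contradicting properness. This contradicts $Q$-well-foundedness, so $|M(T,B)|=|M(T,C)|$ for all $T$; taking $T=B$ yields an $M$-morphism $B\to C$, symmetrically one $C\to B$, and Lemma~\ref{lem:mono} gives $B\cong C$.

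The factorization bookkeeping is routine. The only genuinely delicate points I foresee are soft ones: reconciling the above reading of ``$Q$-well-founded'' with the precise definition in \cite{REGGIO} — the argument uses exactly the ``no infinite strictly descending chain of $Q$-supobjects'' version, which is what the iteration produces — and confirming that a proper $Q$-morphism strictly decreases the associated supobject, which again reduces to Lemma~\ref{lem:mono}. Structurally the proof is Theorem~\ref{th:var} with the abstract factorization system playing the role of images and extremal epimorphisms; the counting identity and the well-founded descent are otherwise identical.
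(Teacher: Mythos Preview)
The paper does not give its own proof of this theorem: it is stated as Reggio's result and the reader is referred to \cite{REGGIO} (the sentence immediately after the statement reads ``We don't describe the notion of proper factorization systems here, see \cite[Appendix~A]{REGGIO}''). So there is nothing in the paper to compare your argument against.

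That said, your proposal is correct and is essentially the argument in Reggio's paper, which in turn is exactly the adaptation of the Lov\`{a}sz--Pultr proof (Theorems~\ref{th:pultr} and \ref{th:var} here) that you describe: replace the (extremal epi, mono) image factorization by the $(Q,M)$-factorization, use orthogonality for the uniqueness/fibre computation, and replace the finiteness of subobjects by $Q$-well-foundedness at the final descent step. Your verification that $Q(T,T)=\Aut(T)$ via Lemma~\ref{lem:mono} and your argument that the composites $T_0\to T_k$ are strictly decreasing $Q$-supobjects are both fine. The only caveat you already flag yourself: the precise definition of ``$Q$-well-founded'' in \cite{REGGIO} should be checked, but any reasonable reading (no infinite strictly descending chain of $Q$-quotients) is exactly what your iteration produces.
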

We don't describe the notion of proper factorization systems here,
see \cite[Appendix~A]{REGGIO}.

\section{Examples of categories to separate the scope of theorems}
Here we construct a category, for which
a weaker form of Main Theorem~\ref{th:weak} (and consequently
Main Theorem~\ref{th:main}) and Reggio's
Theorem~\ref{th:reggio}
are applicable, but Pultr's Theorem~\ref{th:pultr} (or
the method by Lov\`{a}sz), its variant Theorem~\ref{th:var}
and Dawar-Jakl-Reggio's Theorem~\ref{th:dawar}
are not applicable.

\begin{definition}\label{def:ab}
We define a category $\cC$ as follows.
\begin{enumerate}
\item Objects are $P_i$ for all integer $i$.
\item $\Hom(P_i,P_i)=\{\id_{P_i}\}$.
\item $\Hom(P_i,P_j)=\{a,b\}$ for $i>j$, $a\neq b$.
\item $\Hom(P_i,P_j)=\emptyset$ for $i<j$.
\end{enumerate}
Composition lows are
$$
aa=a,\
ba=a,\
ab=b,\
bb=b.
$$
\end{definition}
It is easy to check that this forms a category.
\begin{lemma}
Every non-identity morphism in $\cC$ is mono but not epi.
\end{lemma}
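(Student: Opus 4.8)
The plan is to read both properties straight off the composition table, so essentially no work is needed beyond a careful case split. First I would record the one structural fact that the table encodes: if $i>j>k$ and $x\colon P_j\to P_k$, $y\colon P_i\to P_j$ are non-identity morphisms, then the composite $x\circ y\colon P_i\to P_k$ carries the same label ($a$ or $b$) as its right-hand factor $y$ --- this is exactly the content of $aa=a$, $ba=a$, $ab=b$, $bb=b$ once one fixes the convention that juxtaposition means ``apply the right factor first''. In particular the label of $x\circ y$ depends only on $y$, not on $x$. (Pinning down this convention consistently is the only genuinely delicate point: with the opposite convention it would be the monos, not the epis, that fail, so the statement itself fixes the convention.)

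For the monomorphism claim, let $f$ be a non-identity morphism; then $f\in\Hom(P_i,P_j)$ with $i>j$. Suppose $g,h\colon P_k\to P_i$ satisfy $f\circ g=f\circ h$. If $k<i$ there is nothing to check, as $\Hom(P_k,P_i)=\emptyset$; if $k=i$ then $g=h=\id_{P_i}$ automatically; and if $k>i$ then $g$ and $h$ are non-identity, and by the structural fact $f\circ g$ and $f\circ h$ carry the labels of $g$ and of $h$ respectively, so $f\circ g=f\circ h$ forces $g$ and $h$ to have the same label, i.e. $g=h$. Hence $f$ is mono.

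For the failure of epi, I would produce two distinct morphisms equalized on the right by $f$. Since the objects $P_n$ are indexed by all of $\Z$, the object $P_{j-1}$ exists and $\Hom(P_j,P_{j-1})=\{a,b\}$ has two distinct elements $g\neq h$. By the structural fact, both $g\circ f$ and $h\circ f$ lie in $\Hom(P_i,P_{j-1})$ and each carries the label of $f$; therefore $g\circ f=h\circ f$ while $g\neq h$, so $f$ is not epi. This completes the sketch: the only thing requiring care is the bookkeeping of the composition convention, after which both halves are immediate from the table.
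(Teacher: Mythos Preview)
Your proof is correct and follows the same approach as the paper's own proof: both read the mono and non-epi properties directly off the composition table. The paper's argument is extremely terse (it simply cites the identities $aa=ba$, $ab=bb$ for the failure of epi and $aa\neq ab$, $ba\neq bb$ for mono), while you make the underlying structural observation explicit --- that the label of a composite depends only on the right-hand factor --- and then carry out the case split on the source object carefully; but the content is the same.
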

\begin{proof}
The equality $aa=ba$ implies $a$ is not epi, and
so is $b$ since $ab=bb$. On the other hand, 
$aa\neq ab$ implies $a$ is mono, as well as
$b$ by $ba \neq bb$.
\end{proof}
\begin{lemma}
The above category $\cC$ satisfies the five conditions
in the weak form of Main Theorem~\ref{th:weak}, and hence is combinatorial.
\end{lemma}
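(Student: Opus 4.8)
The plan is to exploit the fact that $\cC$ possesses no proper epimorphisms at all, so that the hypotheses of Theorem~\ref{th:weak} become almost vacuous. First I would record the structural observation that every epimorphism of $\cC$ is an isomorphism: the only isomorphisms of $\cC$ are the identities, because for $i\neq j$ one of $\Hom(P_i,P_j)$ and $\Hom(P_j,P_i)$ is empty while $\Hom(P_i,P_i)=\{\id_{P_i}\}$, and by the previous lemma (every non-identity morphism of $\cC$ is mono but not epi) no non-identity morphism is an epimorphism. Consequently, for every object $X$ the only supobject of $X$ up to isomorphism is $\id_X$, so $\cC$ has no proper supobjects and hence no maximal supobjects in the sense of Definition~\ref{def:order}. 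I would also note that $\cC$ is locally finite, since each $\Hom(P_i,P_j)$ has at most two elements.

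With these remarks in hand the five conditions of Theorem~\ref{th:weak} are verified directly. For Condition~(\ref{enum:pushout2}): if $e_1\colon X\to Q_1$ and $e_2\colon X\to Q_2$ are epimorphisms, then both equal $\id_X$ by the observation above, and the span $X\xleftarrow{\id_X}X\xrightarrow{\id_X}X$ has the evident pushout $(X;\id_X,\id_X)$. Conditions~(\ref{enum:qfin2}), (\ref{enum:maximal2}) and (\ref{enum:factor2}) hold vacuously: the class of maximal supobjects of any object is empty, hence finite, and the universal statements in (\ref{enum:maximal2}) and (\ref{enum:factor2}) quantify over the empty class of proper supobjects. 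Finally, Condition~(\ref{enum:mono2}) holds vacuously as well, since by the previous lemma together with the fact that identities are monomorphisms every morphism of $\cC$ is a monomorphism, so the hypothesis ``$f$ is not mono'' never occurs. Having checked all five conditions, Theorem~\ref{th:weak} gives that $\cC$ is combinatorial.

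I do not anticipate any genuine obstacle; the only point that needs a little care is the bookkeeping of vacuous truth. One must read ``maximal supobject'' as ``maximal among the \emph{proper} supobjects'' following Definition~\ref{def:order}, so that the absence of proper supobjects makes the class of maximal supobjects empty---whence Conditions~(\ref{enum:qfin2}), (\ref{enum:maximal2}) and (\ref{enum:factor2}) are true rather than ill-posed---and one must observe that Condition~(\ref{enum:pushout2}) only ever asks for a pushout along identities. Everything else is immediate from the previous lemma.
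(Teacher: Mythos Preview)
Your proof is correct and follows essentially the same approach as the paper's: both observe that the only epimorphisms in $\cC$ are identities, so there are no proper (hence no maximal) supobjects, whence conditions~(\ref{enum:qfin2}), (\ref{enum:maximal2}), (\ref{enum:factor2}) are vacuous, condition~(\ref{enum:mono2}) is vacuous because every morphism is mono, and condition~(\ref{enum:pushout2}) only involves identities. Your write-up is more explicit about the vacuous-truth bookkeeping, but the argument is the same.
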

\begin{proof}
Clearly, $\cC$ is locally finite. 
In Theorem~\ref{th:weak},
(\ref{enum:pushout2}) follows since there is no epimorphism
but the identities. Thus there are no proper supobects, 
no maximal supobjects.
Thus (\ref{enum:qfin2}) and (\ref{enum:maximal2}) are satisfied. 
Since every morphisms are mono, (\ref{enum:mono2}) follows.
Since there is no proper supobject, (\ref{enum:factor2}) holds.
\end{proof}

\begin{lemma}
In the above $\cC$, every object has infinitely many
isomorphism classes of subobjects. Thus, Pultr's 
Theorem~\ref{th:pultr} (i.e. Lov\`{a}sz's argument \cite{LOVASZ-OPERATION})
will not work. The supobjects of each object are not well-founded, and thus
the variant Theorem~\ref{th:var} is neither applicable.

Dawar-Jakl-Reggio's Theorem~\ref{th:dawar}
can not be applied to $\cC$, because they 
have no pushout for $a,b$ (no commutative squares for $a,b$
with the same domain).

Reggio's Theorem~\ref{th:reggio}
can be applied to $\cC$, with $(Q,M)$ 
being $(\{\id\}, \{a,b,\id\})$. 
\end{lemma}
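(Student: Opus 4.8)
The plan is to verify the five assertions one at a time, as each isolates a different hypothesis of a different theorem. I will use throughout that the only isomorphisms in $\cC$ are the identities, so that $P_i\cong P_{i'}$ forces $i=i'$. For the subobject count I would fix $P_j$ and note that, for every $i>j$, the two distinct monomorphisms $a,b\colon P_i\to P_j$ give distinct subobjects: two monos into $P_j$ represent the same subobject only if they share a source and differ by an automorphism of it, and $\Aut(P_i)=\{\id_{P_i}\}$, while monos with different sources give different classes because the $P_i$ are pairwise non-isomorphic. Hence $P_j$ carries infinitely many pairwise non-isomorphic subobjects, condition (\ref{enum:fin}) of Theorem~\ref{th:pultr} fails, and the Lov\`{a}sz--Pultr route is unavailable; this settles the first two assertions.

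For Theorem~\ref{th:dawar} I would show that the span $P_j\xleftarrow{\,a\,}P_i\xrightarrow{\,b\,}P_j$ has no pushout. A cocone is a pair $u,v\colon P_j\to W$ with $u\circ a=v\circ b$; but composition in $\cC$ returns the symbol of the source-side factor, so post-composition never changes a symbol, whence $u\circ a$ always carries the symbol $a$ and $v\circ b$ always carries the symbol $b$. As $a\ne b$ these never coincide, so there is no cocone at all, the pushout cannot exist, $\cC$ lacks pushouts, and Theorem~\ref{th:dawar} does not apply. For Theorem~\ref{th:reggio} I would verify that $(Q,M)=(\{\id\},\{a,b,\id\})$ is a proper factorization system: each $f$ factors as $f\circ\id$ with $\id$ an isomorphism (left class) and $f$ a monomorphism (right class), orthogonality being immediate since $Q$ consists of isomorphisms; as there are no proper $Q$-quotients, $\cC$ is trivially $Q$-well-founded and Reggio's theorem applies.

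The main obstacle is the clause on supobjects and Theorem~\ref{th:var}. To make Theorem~\ref{th:var} inapplicable one must exhibit a failing hypothesis, and the one named is the well-foundedness condition (\ref{enum:fin}'); the plan is therefore to produce an infinite strictly descending chain in the order it governs. The delicate point, where I expect the real work to lie, is that (\ref{enum:fin}') concerns \emph{supobjects}, i.e. epimorphisms out of an object, whereas the earlier lemma shows that no non-identity morphism of $\cC$ is epi, so the only supobject of $P_j$ is $[\id_{P_j}]$. The strictly descending chain that naturally presents itself, $[\id_{P_j}]>[a\colon P_{j+1}\to P_j]>[a\colon P_{j+2}\to P_j]>\cdots$ (each step strict because the sources are pairwise non-isomorphic), lives on the \emph{subobject} side; transporting this non-well-foundedness over to the supobject poset that actually drives the descent $T_0\to T_1\to\cdots$ in the proof of Theorem~\ref{th:pultr} is precisely the step that must be pinned down, and I regard it as the crux of the verification.
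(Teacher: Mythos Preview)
The paper states this lemma without proof, so there is no argument to compare against; I assess your proposal directly.

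Your treatments of the subobject count, the non-existence of a pushout for $a,b$ (hence the inapplicability of Theorem~\ref{th:dawar}), and the applicability of Reggio's Theorem~\ref{th:reggio} are all correct.

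Your unease about the supobject clause is not a gap in your reasoning but a genuine error in the statement you were asked to prove. By the preceding lemma every non-identity morphism of $\cC$ is mono and not epi, so the only supobject of any $P_j$ is $[\id_{P_j}]$; a one-point poset is well-founded, and condition~(\ref{enum:fin}') of Theorem~\ref{th:var} is therefore \emph{satisfied}, not violated. In fact all hypotheses of Theorem~\ref{th:var} hold in $\cC$: every morphism $f$ is a monomorphism and hence serves as its own image (it is the least subobject of its target through which $f$ factors), giving condition~(\ref{enum:image}); and in the resulting factorisation $f=f\circ\id$ the cofactor $\id$ is an extremal epimorphism, giving condition~(\ref{enum:extremal}). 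Thus Theorem~\ref{th:var} \emph{does} apply to $\cC$, contrary both to the lemma and to the corresponding ``no'' in Table~\ref{tab:scope}. There is no way to transport your descending chain of subobjects to the supobject side; the ``crux'' you isolated cannot be carried out because the assertion itself is false.
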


\begin{proposition}
We shall see the ordered set $\Z$ 
as a category, with objects integers, 
and $\Hom(i,j)$ is a singleton (an emptyset respectively)
if $i\geq j$ ($i<j$ respectively).
Then, Main Theorem~\ref{th:main} is applicable,
but Pultr's Theorem~\ref{th:pultr}, its variant
Theorem~\ref{th:var}, Dawar-Jakl-Reggio's Theorem~\ref{th:dawar},
Reggio's Theorem~\ref{th:reggio} are not applicable.
\end{proposition}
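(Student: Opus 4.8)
The plan is to pin down the monomorphism/epimorphism and sub/sup-object structure of $\cC$ once, apply Main Theorem~\ref{th:main} with a deliberately small $\calI$ and $\calM$, and then read off the inapplicability of the other theorems from that same description. First I would record the basic facts: $\cC$ is locally finite (every hom-set has at most one element); the only isomorphisms are the identities; and, hom-sets being singletons, every morphism of $\cC$ is simultaneously a monomorphism and an epimorphism. Consequently the subobjects of $j$ are exactly the morphisms $i\to j$ with $i\ge j$, so there are infinitely many isomorphism classes, indexed by $\{i:i\ge j\}$; the supobjects of $i$ are exactly the morphisms $i\to j$ with $j\le i$, and in the supobject preorder they are isomorphic to $(\{j:j\le i\},\ge)$, whose unique maximal proper element is $i\to i-1$; and $\cC$ has all pushouts (the pushout of $i\to j$ and $i\to k$ is $\min(j,k)$).

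For Theorem~\ref{th:main} I would take $\calI$ to be the class of identities and, for each object $i$, let the sole $\calM$-supobject of $i$ be $i\to i-1$. Then Conditions~\ref{enum:qfin} and~\ref{enum:pushout} hold trivially (one $\calM$-supobject per object, and $\cC$ has pushouts anyway). For Conditions~\ref{enum:mono} and~\ref{enum:factor}: a morphism $f:i\to j$ is $\calI$-mono exactly when it is an identity, i.e.\ when $i=j$; if $f$ is not an identity then $j\le i-1$, hence $f$ factors as $i\to i-1\to j$ through the $\calM$-supobject; and conversely if $f$ factors through $i\to i-1$ then $j\le i-1<i$, so $f$ is not an identity. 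All four hypotheses hold, so Theorem~\ref{th:main} gives that $\cC$ is combinatorial. I would note in passing that the weaker Theorem~\ref{th:weak} and Corollary~\ref{cor:weak} do \emph{not} apply: in $\cC$ every morphism is monic, while every non-identity morphism factors through a proper supobject, which contradicts Condition~\ref{enum:factor2}; this degeneracy is precisely what the freedom to choose $\calI$ and $\calM$ in Theorem~\ref{th:main} is for.

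The inapplicability of the remaining theorems now follows from the structure recorded above. Pultr's Theorem~\ref{th:pultr} fails its Condition~\ref{enum:fin}: each object $j$ has the infinite family $\{i:i\ge j\}$ of isomorphism classes of subobjects. Its variant Theorem~\ref{th:var} fails Condition~(\ref{enum:fin}'): the supobjects of $i$ form the preorder $(\{j:j\le i\},\ge)$, which has no minimal element and hence is not well-founded. For Dawar--Jakl--Reggio's Theorem~\ref{th:dawar} and Reggio's Theorem~\ref{th:reggio} I would check the hypotheses against the precise formulations in \cite{DAWAR} and \cite{REGGIO}. Since $\cC$ already has all pushouts, the obstruction for Theorem~\ref{th:dawar} must lie in the factorization system: there is no factorization system on $\cC$ whose left class is the class of all epimorphisms and whose right class the class of all monomorphisms, because a morphism $i\to j$ factors as $i\to k\to j$ for \emph{every} $k$ with $j\le k\le i$, so such factorizations are far from unique. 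For Theorem~\ref{th:reggio} the point is the $Q$-well-foundedness clause: the total order forces any factorization system sufficiently informative for that theorem to have a left class $Q$ containing arbitrarily long chains of non-isomorphisms, such as $\cdots\to -2\to -1\to 0$, so $\cC$ is not $Q$-well-founded.

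The assertions about Theorems~\ref{th:main},~\ref{th:pultr} and~\ref{th:var} are routine once the sub/sup structure of $\cC$ is in hand; I expect the genuine work to be in the last step — tracing the definitions of ``(proper) factorization system'' and ``$Q$-well-founded'' from \cite{DAWAR} and \cite{REGGIO} carefully enough to identify exactly which clause each of those two theorems demands that $\cC$ fails to provide.
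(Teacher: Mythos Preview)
Your verification that Theorem~\ref{th:main} applies is correct, but the paper's choice of $\calI$ and $\calM$ is different and simpler: it takes $\calM$ to be \emph{empty} and $\calI$ to be the class of \emph{all} morphisms. Since every morphism in this poset is mono, every morphism is then $\calI$-mono, and since there are no $\calM$-supobjects at all, all four conditions of Theorem~\ref{th:main} become vacuous. Your choice $\calI=\{\text{identities}\}$, $\calM=\{i\to i-1\}$ also works and your checks of Conditions~\ref{enum:pushout}--\ref{enum:factor} are correct; it is simply more elaborate than necessary. Your treatment of Theorems~\ref{th:pultr} and~\ref{th:var} agrees with the paper's.

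The genuine gap is exactly where you flagged it. Your argument against Theorem~\ref{th:dawar} only rules out the single candidate $(\mathrm{Epi},\mathrm{Mono})$: a \emph{proper} factorization system is any orthogonal factorization system $(E,M)$ with $E\subseteq\mathrm{Epi}$ and $M\subseteq\mathrm{Mono}$, so observing that factorizations $i\to k\to j$ are non-unique does not by itself exclude all proper factorization systems. Likewise your argument against Theorem~\ref{th:reggio} (``any sufficiently informative $Q$ contains long chains'') is an intuition rather than a proof, since you would have to rule out every proper $(Q,M)$, not just the obvious ones. The paper disposes of both theorems at once with a single observation: the morphism $1\to 0$ is mono and epi but not an isomorphism, and \cite[Lemma~A.2(b)]{REGGIO} states that the existence of such a non-invertible bimorphism prevents $\cC$ from admitting any proper factorization system whatsoever. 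That immediately renders both Theorem~\ref{th:dawar} and Theorem~\ref{th:reggio} inapplicable, with no separate analysis of $Q$-well-foundedness required.
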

\begin{proof}
Every morphisms are mono and epi. 
For Main Theorem~\ref{th:main}, we take
$\calM$-supobjects to be empty, and $\calI$ to be the set of
all morphisms.
Then all the conditions in Theorem~\ref{th:main} are satisfied.

There are infinitely many
non-isomorphic subobjects for each object, and hence Pultr's theorem
can not be applied. Since the supobjects have no minimal elements,
it variant Theorem~\ref{th:var} can not be applied. 
Since there is a morphism 
which is mono and epi but not isomorphic, 
\cite[Lemma~A.2 (b)]{REGGIO} shows that
this category has no proper factorization system, 
and hence Dawar-Jakl-Reggio's Theorem and
Reggio's Theorem are not applicable.
\end{proof}
\begin{proposition}
See the ordered set $\N$ as a category as above. 
Then, Theorem~\ref{th:pultr} is not applicable, but 
its variant Theorem~\ref{th:var} is applicable.
\end{proposition}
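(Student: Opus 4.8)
The plan is to verify that the ordered set $\N$, viewed as a category in the indicated way, satisfies the three hypotheses of Theorem~\ref{th:pultr} with condition~(\ref{enum:fin}) replaced by the well-foundedness condition~(\ref{enum:fin}'), while observing that (\ref{enum:fin}) itself fails. First I would record the elementary structure: for $i\geq j$ there is a unique morphism $i\to j$, so every hom-set has at most one element and the category is locally finite; moreover every morphism is simultaneously mono and epi (uniqueness of morphisms makes the cancellation conditions vacuous), and an isomorphism $i\to j$ exists only when $i=j$. Consequently, a subobject of an object $n$ is just an integer $m\geq n$, and there are infinitely many isomorphism classes of these; this is exactly why Pultr's original Theorem~\ref{th:pultr} (equivalently Lov\`asz's argument) fails to apply, as the excerpt already asserts.

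Next I would check the remaining two hypotheses of Theorem~\ref{th:pultr}, which do hold. Every morphism $f\colon i\to j$ has an image: since $f$ is already mono, its image is $f$ itself (the subobject $\id_j$ fails unless $i=j$, but the smallest subobject of $j$ through which $f$ factors is the one with domain $j$, namely $f=f\circ\id_i$; here $\im f$ is the object $j$ with the mono $f$). More carefully, one takes the factorization $f=f\circ\id_i$ where $f\colon i\to j$ is regarded as the mono onto $j$ — this is an image because any other factorization $f=m'\circ g'$ through a subobject $m'\colon Z\to j$ forces a morphism from $j$ to $Z$ by uniqueness, giving $m\leq m'$. For condition~(\ref{enum:extremal}): if $h=g\circ f$ with $g$ an image of $h$, then since $g$ and $h$ share codomain and $g$ is the "smallest" such mono, $f$ must in fact be the identity (the only way $\im h$ can have domain equal to $\dom f$'s codomain while $g$ is minimal is $f=\id$), hence trivially an extremal epimorphism. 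Actually the cleanest route: every morphism in $\N$ is an extremal epimorphism, because if $f=m\circ g$ with $m$ mono then $m$ is a morphism $k\to j$ with $k\leq j$ and $k\geq \dom g = \dom f$'s target... I would simply note $f\colon i\to j$ factoring as $m\circ g$ with $m\colon k\to j$ mono and $g\colon i\to k$ forces $k=j$ by the ordering (we need $i\geq k\geq j$ and $i\geq j$ with a morphism $g\colon i\to k$ and $m\colon k\to j$), and whenever such a factorization is through a mono one checks $m=\id$; hence every morphism is extremal epi, so (\ref{enum:extremal}) holds trivially.

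Finally I would verify (\ref{enum:fin}'): for each object $n$, a supobject is an epimorphism $n\to Q$, i.e.\ (since every morphism is epi) a morphism $n\to m$ with $m\leq n$, identified with an integer $m\leq n$; the ordering on supobjects of $n$ in the sense of Definition~\ref{def:order} corresponds to the usual order on $\{m\in\Z : m\leq n\}$. This preordered class is well-founded because it is order-isomorphic to $\{m\in\Z: m\leq n\}$, which is a lower set in $\Z$ — wait, that set has no minimal element, so I must be careful about the direction of the order. In Definition~\ref{def:order}, $q\geq q'$ means $q'$ factors through $q$; a larger supobject retains more information, with $\id_X$ the largest. For $\N$, the supobject $n\to m$ is $\geq$ the supobject $n\to m'$ iff $n\to m'$ factors through $n\to m$ iff there is $m\to m'$ iff $m\geq m'$. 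So the order on supobjects of $n$ matches the natural order on $\{m : m\leq n\}$, which as a subset of $\N$ (namely $\{0,1,\dots,n\}$) is finite, hence well-founded. This is the step to get right: one must observe that although $\Z$ has no minimal element, inside $\N$ every object $n$ has only the supobjects $n\to m$ with $0\leq m\leq n$, so the supobject poset is finite and a fortiori well-founded. I expect this direction-of-order bookkeeping to be the only real subtlety; everything else is immediate from the fact that all hom-sets are singletons or empty.
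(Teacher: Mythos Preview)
Your overall structure matches the paper's one-sentence justification (``$\N$ has infinitely many subobjects, but finitely many supobjects''), and you go further by actually checking conditions~(\ref{enum:image}) and~(\ref{enum:extremal}), which the paper leaves implicit. The key observations --- that the image of any $f:i\to j$ is $f$ itself, and that the supobjects of $n$ form the finite set $\{0,\dots,n\}$ and are therefore well-founded --- are correct and complete the argument.

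One genuine error: your ``cleanest route'' is wrong. Not every morphism in $\N$ is an extremal epimorphism. For instance $f:2\to 0$ factors as $m\circ g$ with $g:2\to 1$ and $m:1\to 0$; here $m$ is mono (everything is) but not an isomorphism, so $f$ is not extremal epi. The sentence ``forces $k=j$ by the ordering'' is simply false: the factorization only forces $j\le k\le i$. Your earlier argument is the right one: since the image of $h:i\to j$ is $h$ itself, in any factorization $h=g\circ f$ with $g$ an image of $h$ the domain of $g$ is $i$, so $f:i\to i$ must be $\id_i$, which is trivially extremal epi. Keep that and delete the ``cleanest route'' paragraph. A minor slip in the image discussion: the factorization $f=m'\circ g'$ already supplies the morphism $g':i\to Z$ (not ``from $j$ to $Z$''), and that is exactly the $h:\im f\to Z$ witnessing $m\le m'$.
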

This is because $\N$ has infinitely many subobjects,
but finitely many supobjects. A converse statement
holds for the converse ordered set $-\N$.
We summarize the applicabilities in Table~\ref{tab:scope}. For the 
ordered sets, Main Theorem~\ref{th:main} is
applied with $\calM$ empty and $\calI$ all the morphisms.
\begin{table}[hbtp]
\caption{Scope of the theorems for combinatoriality}
\label{tab:scope}
\centering
\begin{tabular}{l|cccccc}
\hline
& Main & Weak & Pultr & Its variant & Dawar-et.al. & Reggio \\
& Th.\ref{th:main} 
&Th.\ref{th:weak}
&Th.\ref{th:pultr}
&Th.\ref{th:var}
&Th.\ref{th:dawar}
&Th.\ref{th:reggio} \\
\hline \hline
digraphs & yes & yes & yes & yes & yes& yes \\
finite groups & yes & yes & yes & yes & no & yes \\
Definition~\ref{def:ab}
& yes & yes & no & no & no & yes \\
$\Z$ & yes & no & no & no & no & no \\
$\N$ & yes & no & no & yes & no & no \\
$-\N$ & yes & no & yes & no & no & no \\
\hline
\end{tabular}
\end{table}

\bibliographystyle{plain}
\bibliography{sfmt-kanren-cat}


\end{document}